\theoremstyle{plain}
\newtheorem{lemma}{Lemma} 
\newtheorem{prop}{Proposition} 
\newtheorem{thm}{Theorem}
\newtheorem{exam}{Example}
\newtheorem{rem}{Remark}
\newtheorem{exer}{Exercise}
\newtheorem{defn}{Definition} 
\newcommand{\clc}{{\mathcal C}}
\newcommand{\cld}{{\mathcal D}}
\title{Cross-connection semigroups amalgam of a vector bundle}
\author {P. G. Romeo }
\address{Dept. of Mathematics, Cochin University of Science and Technology, Kochi, Kerala, INDIA.}
\email{$romeo_-parackal@yahoo.com$}
\subjclass{:Primary 18D70; Secondary 15B51.}
\keywords { Category, normal category,  vector bundle, semigroup, semigroup amalgam}
\thanks{This paper is based on a talk deliveried at ICRA 21- 2024, Shanghai Jiao Tong University, Thanks to NBHM India, CSIR India and SJTU Shanghai} 
\date{}
\begin{document}
	
	\begin{abstract}
	Cross-connections of normal categories was introduced by K.S.S. Nambooripad while discussing the structure of regular semigroups and via this cross-connections he obtained a beautiful representetion of regualr semigroup called the cross-connection semigroup (see cf.\cite{kss}).	Subsequently cross-connection representation of  various other semigroups such as concordant semigroups, semigroup of endomorphisms of a vector space are also described (cf.\cite {pgr}\cite{paa}). In this paper we describe the semigroup amalgam of cross-connection semigroups of the fibers of a vector bundle.
	\end{abstract}
	\maketitle
	
	\section{Preliminaries}
	We begin by briefly recalling basic concepts  in semigroup theory, category theory and vector bundles.
	A semigroup is a structure $(S,\circ)$ where $S$ is a non empty set and $\circ$ is an associative binary operation on $S$, when the binary composition is evident then we simply denote the semigroup by $S$. Semigroups are more general structure then groups. Obviously every group is a semigroup.
	\begin{exam}
		All $n\times n$ matrices with entries from $\mathbb{R}$ or $\mathbb{C}$ with the usual 
		multiplication of matrices is a semigroup which is not a group.
	\end{exam} 
	\begin{exam}
		All transformations on a non empty set $X$  with composition of maps as binary operation is
		the full transformation semigroup written as  $T_X$.
	\end{exam}
	An element $a\in S$ is a regular element in $S$ if there is an $x\in S$ such that $axa=a$. If every element in $S$ are regular then $S$ is a regular semigroup. An element $e\in S$ with $ee=e$ is called an idempotent element.

\begin{defn}
	For $a\in S$, the set $Sa= \{ xa : x\in S \}$ is the principal left ideal generated by $a$. Inparticular when $a$ is an idempotent $e$, then $Se$ is the principal left ideal generated by idempotent $e$.
\end{defn}
 Similarly we have the principal right ideal $aS$ and two sied ideal $SaS$. 
When two elements in a semigroup generate the same principal left ideals then they are said to $\mathcal{L}$- related. This is an equivalence relation on the semigroup and is called the Green's $\mathcal{L}$- relation. When $a, b\in S$ generate the same principal right ideals then they are $\mathcal{R}$-related. 
The intersection of $\mathcal{L}$ and $\mathcal{R}$ is the Green's $\mathcal{H}$- relation and their join the $\mathcal{D}$-relation. These relations are very significant in the study of structure of semigroups and collectively they are known as Green's relations.

	In the following we briefly recall basic notions related to category theory with emphasis to subobject relation factorization and normal categories. 
	\begin{defn} A category $\mathcal{C}$ consists of the following data
		\begin{enumerate}
			\item a class $ \nu\mathcal{C} $ called the class of vertices or objects
			\item a class $\mathcal{C}$ of disjoint sets $\mathcal{C}(a,b)$, one for each pair $(a,b) \in \nu\mathcal{C} \times  \nu\mathcal{C}  $, an element $f \in \mathcal{C}(a,b)$ called a morphism (arrow) from $a$ to $b$, written $ f: a \rightarrow b; \, a = dom\,f$ and $b = cod \,f$
			\item For $a,b,c \in  \nu\mathcal{C}$, a map 
			$$ \circ : \mathcal{C}(a,b) \times \mathcal{C}(b,c) \rightarrow \mathcal{C}(a,c)$$
			$$ (f,g) \rightarrow f\circ g $$
			called the composition of morphisms in $\mathcal{C}$
			\item For each $ a \in \nu\mathcal{C}$, a unique $1_a \in \mathcal{C}(a,a)$ is called the identity morphism on $a$
		\end{enumerate}
		
		These must satisfy the following axioms:
		\begin{enumerate}
			\item[Cat 1] The compostion is associative: for $f \in \mathcal{C}(a,b)$, $g \in \mathcal{C}(b,c)$ and $h \in \mathcal{C}(c,d)$ we have
			$$ f\circ(g\circ h) = (f\circ g)\circ h.$$
			\item[Cat 2] For each $ a \in \nu\mathcal{C},\, f \in \mathcal{C}(a,b),\, g \in \mathcal{C}(c,a),\quad 
			1_a \circ f =f \,\text{and}\,  g \circ 1_a = g.$		
		\end{enumerate}
	\end{defn}

The following are some examples of categories

\begin{itemize}
	\item \textbf{Set}: the category in which objects are sets and morphisms are functions between sets.
	\item \textbf{Grp}: Category with groups as objects and homomorphisms as morphisms.
	\item \textbf{Groups}:  A group is a category with one object in which every arrow has a(two sided) inverse under composition.
\end{itemize}

A functor $F: \mathcal{C}\rightarrow \mathcal{D}$ from a category $\mathcal C$ to a category $\mathcal D$ consists of a vertex map $\nu F: \nu \mathcal{C} \rightarrow \nu \mathcal{D}$ which assigns to each $ a\in \nu \clc $ a vertex $ F(a) \in  \cld $ and a morphism map 
$F: \clc\rightarrow \cld$ which assigns to each morphism $f: a\rightarrow b,$ a morphism 
$ F(f): F(a)\rightarrow  F(b) \in \mathcal D$ such that 
\begin{enumerate}
	\item $ F(1_a) = 1_{F(a)}$ for all $a \in \nu \mathcal C;$ and
	\item$ F(f)F(g) = F(fg)$ for all morphisms $f, g \in \mathcal C$ for which the composition $fg$ exists.
\end{enumerate}

\begin{exam}
	The power set functor $\mathcal{P}: \textbf{Set}\rightarrow\textbf{Set}$. Its object function assigns to each object $X$ in $\textbf{Set}$ the usual power set $\mathcal{P}X$  with elements all subset  $ S\subset X$; its arrow function assigns to each $f:X\rightarrow Y$ that map $\mathcal{P}f:\mathcal{P}X\rightarrow\mathcal{P}Y $ which send each $S\subset X$ to its image $fS\subset Y $.
\end{exam}

Let $ \mathcal{C}$  and $ \mathcal{D} $ be two categories and  $F , G:\mathcal{C}\rightarrow \mathcal{D}$  be two functors. A \textbf{natural transformation} $ \eta:  F\rightarrow G$ is a family $ \{\eta_{a} : F(a) \rightarrow G(a) | a \in \nu {\mathcal C \}} $ of maps in $ \mathcal{D} $ such that for every map $ f: a \rightarrow b $ in $ \mathcal{C} $, yields a commutative diagarm in $ \mathcal{D} $\\
The map $ \eta_{a} $ are called the components of $ \eta $. If each component of $\eta$ is an isomorphism then $\eta$ is called a natural isomorphism .

\begin{defn}
	If $S: \mathcal{D} \rightarrow  \mathcal{C}$ be a functor and $c$ an object of $\mathcal{C}$, a universal arrow from $c \in \nu \clc $ to the functor $S$ is a pair $(r,u)$ where 
	$r \in \nu  \mathcal{D} $ and an arrow $u :c\to Sr$ of $\mathcal{C}$ such that given any pair $(d,f)$ with $d$ an object in $\mathcal{D}$ and $f:c\to Sd$ an arrow of $\mathcal{C}$, there is a  unique $f':r\to d$ of $\mathcal{D}$ with $Sf'\circ u=f$. 
\end{defn}

A preorder $\mathcal P$ is a category such that for any $ p, p'\in v\mathcal P$, the hom-set $\mathcal P(p, p')$ contains at most one morphism. 
In this case, the relation $\subseteq$ on the class $v\mathcal P$ of objects of 
$\mathcal P$ is defined by $p\subseteq p'$ if $\mathcal P(p,p') \neq \emptyset$
is a quasi- order and $\mathcal{P}$ is said to be a strict preorder if $\subseteq$ is a partial order.

\begin{defn} (Category with subobjects) Let $\mathcal C$ be a small category and $\mathcal P$ be a subcategory of $\mathcal C$ such that $\mathcal P$ is a strict preorder with $v\mathcal P= v\mathcal C.$ Then $(\mathcal C, \mathcal P)$ is a $category\; with\; sub\;objects$ if 
	\begin{enumerate}
		\item every $f\in \mathcal P$ is a monomorphism in $\mathcal C$
		\item if $f=hg$ for $f,g\in \mathcal P,$ then $h\in \mathcal P.$
	\end{enumerate}
\end{defn}
\begin{exam}
	In categories $Set, Grp, Vect_K, Mod_R $ the relation on objects induced  by the usual set inclusion is a subobject relation.
\end{exam}

In a category $(\mathcal C, \mathcal P)$ with subobjects, morphisms in $\mathcal P$ are  called inclusions. If $c'\to c$ is an inclusion, we write $c'\subseteq c$ and denotes this inclusion by $j^c_{c'}$. An inclusion $j^c_{c'}$ splits if there exists $q:c\to c'\in \mathcal C$ such that $j^c_{c'}q = 1_{c'}$ and the morphism $q$ is called a retraction.

\begin{defn}
	A morphism $f$ in a category $\mathcal{C}$ with subobjects is said to have \textit{factorization} if $f$ can be expressed as  $ f = pm$ where $p$ is an epimorphism and $m$ is an embedding.
\end{defn}

A normal factorization of a morphism $f\in \mathcal C(c,d)$ is a factorization of the form $f=quj$ where $q:c\to c'$ is a retraction, $u: c'\to d'$ is an isomorphism and $j= j^d_{d'}$ is an inclusion where $c', d'\in v\mathcal C$ with $c'\subseteq c,\; d'\subseteq d$. The morphism $qu$ is known as the epimorphic component of $f$ and is denoted by $f^{\circ}$.  

\begin{defn}
	Let $\mathcal C$ be a category with subobjects and $d\in v\mathcal C$.  A map $\gamma: v\mathcal C \to \mathcal C$ is called a  cone from the base $v\mathcal C$ to the vertex $d$ if 
	\begin{enumerate}
		\item $\gamma(c)\in \mathcal C(c,d)$ for all $c\in v\mathcal C$
		\item if $c\subseteq c'$ then $j^{c'}_c\gamma(c') = \gamma(c)$
	\end{enumerate}
\end{defn}

For a  cone $\gamma$ denote by $c_\gamma$ the vertex of $\gamma$ and for $c\in v\mathcal C$, the morphism $\gamma(c): c\to c_\gamma$ is called the component of $\gamma$ at $c$.
A cone $\gamma$ is said to be normal if there exists $c\in v\mathcal C$ such that $\gamma(c): c\to c_\gamma$ is an isomorphism. We denote by $T\mathcal C$ the set
of all normal cones in 	$\mathcal C$

\begin{defn}
	A category $\mathcal C$ with subobjects is called a normal category if the following holds
	\begin{enumerate}
		\item any morphism in $\mathcal C$ has a normal factorization
		\item every inclusion in $\mathcal C$ splits
		\item for each $c\in v\mathcal C$ there is a normal cone $\gamma $ with vertex $c$ and $\gamma(c) = 1_{c}.$
	\end{enumerate}
\end{defn}

Observe that given a normal cone $\gamma$ and an epimorphism $f: c_\gamma \to d$ the map 
$\gamma * f : a \to \gamma(a)f$ from $v\mathcal C$ to $\mathcal C$ is a normal cone with vertex $d$.

	\begin{rem}
		The set of all normal cones $T\mathcal C$ in $\mathcal C$ with the cone composition 
		$$\gamma^1\cdot \gamma^2= \gamma^1*(\gamma^2_{c_{\gamma^1}})^{\circ}$$
		is a regular semigroup.
	\end{rem}

\begin{exam} For the regular semigroup of normal cones  $T\mathcal C$ in normal category $\mathcal C$ the category of principal left and right ideals $L(T\mathcal C)$ and 
$R(T\mathcal C)$ normal categories.	
\end{exam}

	For each $\gamma\in T\mathcal C,\, H(\gamma, -):\mathcal C \to \bf Set$ gives a set valued functor. A  category whose objects are such set valued functors together with natural tranformations between such functors as morphisms is a functor category and is termed as the normal dual of $\mathcal C$, denoted by $N^*\mathcal C$. 
	
 That is, $N^*\mathcal C$ is the category whose object set is  
	$$vN^*\mathcal C= \{ H(\gamma,-) :\gamma \in T\mathcal C\}$$ and morphisms 
	$\sigma : H(\gamma,-) \to H(\gamma',-)$ given by   	
	\[
	\xymatrix{
		H(\gamma, -) \ar[r]^{\eta_{\gamma}} \ar[d]_{\sigma} & \mathcal{C}(c_{\gamma},-) \ar[d]^{\mathcal{C}(\hat{\sigma},-)}\\
		H(\gamma', -) \ar[r]_{\eta_{\gamma'}} &\mathcal{C}(c_{\gamma'},-) 
	}	
	\]
	where $\hat{\sigma}:c_{\gamma'}\to c_{\gamma}\in \mathcal{C}$.	

	\begin{thm}
		Let $\mathcal C$ be a normal category. Define $G$ on objects and morphisms of 
$R(T\mathcal C)$ as follows 
		$$vG(\gamma T\mathcal C)=H(\gamma,-)$$
		and for $\lambda :\gamma T\mathcal C\to \gamma' T\mathcal C$ the following diagam commutes
		\[
		\xymatrix {
			H(\gamma, -) \ar[r]^{\eta_{\gamma}} \ar[d]_{G(\lambda)} & \mathcal{C}(c_{\gamma},-) \ar[d]^{\mathcal{C}(\tilde{\gamma},-)}\\
			H(\gamma', -) \ar[r]_{\eta_{\gamma'}} & \mathcal{C}(c_{\gamma'},-) 
		}	
		\]
		
		Then $G:R(T\mathcal C)\to N^*\mathcal C$ is an isomorphism of normal categories.
	\end{thm}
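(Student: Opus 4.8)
The plan is to verify in turn that $G$ is a well defined functor, that its vertex map is a bijection onto $vN^{*}\mathcal C$, that $G$ is full and faithful, and that $G$ and $G^{-1}$ carry inclusions to inclusions. A bijective-on-objects, full and faithful functor between two normal categories which preserves and reflects inclusions is automatically an isomorphism of normal categories, since the extra structure (normal factorizations, splittings of inclusions, normal cones) is entirely determined by the underlying category with subobjects; so these checks suffice.

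I would begin with the vertex map. The objects of $R(T\mathcal C)$ are the principal right ideals $\gamma T\mathcal C$, which may be indexed by idempotent normal cones, and $\gamma T\mathcal C=\gamma' T\mathcal C$ holds precisely when $\gamma\mathrel{\mathcal R}\gamma'$ in $T\mathcal C$. The lemma to establish here is that $\gamma\mathrel{\mathcal R}\gamma'$ if and only if $H(\gamma,-)=H(\gamma',-)$ as set valued functors: the forward implication by unwinding $H(\gamma,c)=\{\gamma\ast g:g\in\mathcal C(c_\gamma,c)^{\circ}\}$ and using the description of Green's $\mathcal R$ on $T\mathcal C$, the converse by evaluating the functors at appropriate vertices and recovering $\gamma$ itself from $H(\gamma,-)$. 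Since $vN^{*}\mathcal C=\{H(\gamma,-):\gamma\in T\mathcal C\}$ by definition, this lemma makes $vG$ a well defined bijection; in effect it parametrises the $\mathcal R$-classes of $T\mathcal C$ by the objects of the normal dual.

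Next I would treat functoriality and faithfulness. A morphism $\lambda:\gamma T\mathcal C\to\gamma' T\mathcal C$ of $R(T\mathcal C)$ is a translation of $T\mathcal C$ induced by a normal cone $\alpha$ with $\gamma'\cdot\alpha=\alpha=\alpha\cdot\gamma$, and the square in the statement pins $G(\lambda)$ down as the unique natural transformation satisfying $\eta_{\gamma'}\circ G(\lambda)=\mathcal C(\tilde\gamma,-)\circ\eta_\gamma$, where $\tilde\gamma\in\mathcal C(c_{\gamma'},c_\gamma)$ is the component $\alpha(c_{\gamma'})$ followed by the canonical inclusion of the vertex $c_\alpha$ into $c_\gamma$. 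Three points need to be checked: that $\eta_\gamma$ is pointwise injective, so $H(\gamma,-)$ is a genuine subfunctor of $\mathcal C(c_\gamma,-)$ and the displayed equation has at most one solution; that $\mathcal C(\tilde\gamma,-)$ maps $\eta_\gamma\bigl(H(\gamma,-)\bigr)$ into $\eta_{\gamma'}\bigl(H(\gamma',-)\bigr)$, so a solution $G(\lambda)$ actually exists between the $H$-functors; and that $\tilde\gamma$ is independent of the cone $\alpha$ representing $\lambda$. Functoriality then follows from the contravariant Yoneda isomorphism $\mathcal C(b,a)\cong\mathrm{Nat}\bigl(\mathcal C(a,-),\mathcal C(b,-)\bigr)$, which turns composition of the arrows $\tilde\gamma_i$ into composition of the $\mathcal C(\tilde\gamma_i,-)$, matched against the composition law of $R(T\mathcal C)$; and faithfulness follows because $G(\lambda_1)=G(\lambda_2)$ forces $\tilde\gamma_1=\tilde\gamma_2$ (evaluate at $c_\gamma$ and track $1_{c_\gamma}$), hence $\lambda_1=\lambda_2$, a translation being determined by the cone data it produces.

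The substantive step, and the one I expect to be the main obstacle, is fullness. Given a morphism $\sigma$ of $N^{*}\mathcal C$ from $H(\gamma,-)$ to $H(\gamma',-)$ with witnessing arrow $\hat\sigma\in\mathcal C(c_{\gamma'},c_\gamma)$, I would form the normal cone $\alpha=\gamma'\ast\hat\sigma^{\circ}$ — that is, $a\mapsto\gamma'(a)\,\hat\sigma^{\circ}$, with vertex the image of $\hat\sigma$ inside $c_\gamma$ — verify that $\gamma'\cdot\alpha=\alpha=\alpha\cdot\gamma$ so that the translation $\lambda(\alpha)$ is a legitimate morphism $\gamma T\mathcal C\to\gamma' T\mathcal C$, and check that its associated arrow is exactly $\hat\sigma$, whence $G(\lambda(\alpha))=\sigma$ by the uniqueness already established. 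The delicate part is confirming that $\alpha$ has the two-sided absorption property above and that the commutativity required of $\sigma$'s square is precisely what makes $\gamma'\ast\hat\sigma^{\circ}$ a normal cone with the right vertex; together with the injectivity of $\eta_\gamma$ and the fact that $G(\lambda)$ restricts from representables back to the $H$-functors, this is where the real care is needed. Once it is done, the last point is routine: an inclusion of principal right ideals is realized by a translation whose associated arrow is an inclusion of $\mathcal C$, and then $\mathcal C(\tilde\gamma,-)$ is the inclusion of the corresponding subfunctors, while conversely the inclusions of $N^{*}\mathcal C$ are exactly those of this form; this gives preservation and reflection of inclusions and completes the proof.
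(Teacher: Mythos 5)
Your outline is sound and follows what is essentially the classical argument: identify $vN^*\mathcal C$ with the $\mathcal R$-classes of $T\mathcal C$ via the lemma that $H(\gamma,-)=H(\gamma',-)$ iff $\gamma\,\mathcal R\,\gamma'$, pin down $G(\lambda)$ through the Yoneda square, and establish fullness by building a cone of the form $\gamma'\ast\hat\sigma^{\circ}$ from the witnessing arrow $\hat\sigma$, finishing with preservation and reflection of inclusions. The paper itself gives no proof of this theorem (it is recalled from Nambooripad's \emph{Theory of cross-connections}), and your plan reproduces that standard route; the steps you flag as delicate (injectivity of $\eta_\gamma$, independence of $\tilde\gamma$ from the representing cone, the two-sided absorption property in the fullness step) are exactly the verifications carried out there, and none of them fails.
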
	

	Let $F:\mathcal C\to \mathcal D$ be an isomorphism of normal categories and $\gamma$ is a normal cone in $\mathcal C$, then 
	$$F(\gamma)(F(c))=F(\gamma(c))$$
	defines a normal cone in $\mathcal D$ and we denote this cone by $F(\gamma)$.
	\begin{lemma}
		Let $F:\mathcal C\to \mathcal D$ be an isomorphism of normal categories. For each 
		$H(\gamma, -)\in vN^*\mathcal C$ and $\eta_{\gamma}\mathcal C(f,-)\eta_{\gamma'}^{-1}\in  N^*\mathcal C$ define 
		\begin{align*}
			F_*(H(\gamma,-)) &= H(F(\gamma),-),\\
			F_*(\eta ) &=\eta_{F(\gamma)}\mathcal D(F(f),-)\eta_{F(\gamma')}^{-1}.
		\end{align*}
		Then $F_*$ is an isomorphism of $N^*\mathcal C$ onto $N^*\mathcal D$.
	\end{lemma}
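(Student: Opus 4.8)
The plan is to exhibit $F_*$ as a functor $N^*\mathcal C\to N^*\mathcal D$, to produce its two–sided inverse $(F^{-1})_*$ (built from the inverse isomorphism of normal categories $F^{-1}:\mathcal D\to\mathcal C$), and finally to check that $F_*$ and $(F^{-1})_*$ carry inclusions to inclusions, so that $F_*$ is an isomorphism of the normal categories $N^*\mathcal C$ and $N^*\mathcal D$ (which are normal categories by the previous theorem, being isomorphic to $R(T\mathcal C)$ and $R(T\mathcal D)$). Throughout I would use the bookkeeping fact that a morphism $\sigma:H(\gamma,-)\to H(\gamma',-)$ of $N^*\mathcal C$ is completely determined by the morphism $\hat\sigma=f\in\mathcal C(c_{\gamma'},c_\gamma)$ in its defining square: $\mathcal C(\hat\sigma,-)$ is recovered as $\eta_\gamma^{-1}\sigma\eta_{\gamma'}$, and $\hat\sigma$ is in turn recovered by evaluating $\mathcal C(\hat\sigma,-)$ at $c_\gamma$ on $1_{c_\gamma}$; moreover $\widehat{1_{H(\gamma,-)}}=1_{c_\gamma}$ and $\widehat{\sigma\tau}=\hat\tau\circ\hat\sigma$.

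First I would verify well–definedness on objects. Since $F$ is an isomorphism of normal categories, $\nu F:\nu\mathcal C\to\nu\mathcal D$ is a bijection, so $F(\gamma)$ is defined on all of $v\mathcal D$ and, by the remark preceding the lemma, is a normal cone in $\mathcal D$; its vertex is $F(c_\gamma)$ because $F$ sends the isomorphism component of $\gamma$ to an isomorphism. Hence $H(F(\gamma),-)\in vN^*\mathcal D$, and $\gamma\mapsto F(\gamma)$ is a bijection $T\mathcal C\to T\mathcal D$ with inverse $\delta\mapsto F^{-1}(\delta)$, so $F_*$ is a bijection on object sets. For morphisms the key point is that $H(F(\gamma),-)$ is the transport of $H(\gamma,-)$ along $F$: as $F$ preserves inclusions, retractions and isomorphisms it preserves normal factorizations and epimorphic components, so $F(\gamma*g^{\circ})=F(\gamma)*F(g)^{\circ}$ for every $g\in\mathcal C(c_\gamma,c)$, and $F$ restricts to a bijection $H(\gamma,c)\to H(F(\gamma),F(c))$ natural in $c$. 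Consequently, if $\sigma=\eta_\gamma\mathcal C(f,-)\eta_{\gamma'}^{-1}$ is a morphism of $N^*\mathcal C$, then transporting everything along $F$ shows that $\eta_{F(\gamma)}\mathcal D(F(f),-)\eta_{F(\gamma')}^{-1}$ (with $F(f)\in\mathcal D(c_{F(\gamma')},c_{F(\gamma)})$) is a genuine morphism of $N^*\mathcal D$; so $F_*(\sigma)$ is well defined and $\widehat{F_*(\sigma)}=F(\hat\sigma)$.

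Given the previous paragraph, functoriality and invertibility are formal. From $F(1_{c_\gamma})=1_{c_{F(\gamma)}}$ we get $F_*(1_{H(\gamma,-)})=1_{H(F(\gamma),-)}$; from $\widehat{\sigma\tau}=\hat\tau\circ\hat\sigma$, the functoriality of $F$, and the fact that hats determine morphisms we get $F_*(\sigma\tau)=F_*(\sigma)F_*(\tau)$. Applying the same construction to $F^{-1}$ yields a functor $(F^{-1})_*:N^*\mathcal D\to N^*\mathcal C$, and $F^{-1}(F(\gamma))=\gamma$ together with $F^{-1}(F(f))=f$ gives $(F^{-1})_*\circ F_*=\mathrm{id}_{N^*\mathcal C}$ and, symmetrically, $F_*\circ(F^{-1})_*=\mathrm{id}_{N^*\mathcal D}$, so $F_*$ is an isomorphism of categories. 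To upgrade this to a normal isomorphism I would invoke the description of inclusions of $N^*\mathcal C$ as exactly those morphisms whose hat is an inclusion $j^{c_\gamma}_{c_{\gamma'}}$ of $\mathcal C$: since $F$ carries inclusions of $\mathcal C$ to inclusions of $\mathcal D$, $\widehat{F_*(\sigma)}=F(j^{c_\gamma}_{c_{\gamma'}})=j^{c_{F(\gamma)}}_{c_{F(\gamma')}}$ is again an inclusion, hence $F_*(\sigma)$ is an inclusion of $N^*\mathcal D$, and likewise for $(F^{-1})_*$. Therefore $F_*$ is an isomorphism of normal categories.

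I expect the main obstacle to be the morphism step of well–definedness: making precise that $H(F(\gamma),-)$ is the $F$–transport of $H(\gamma,-)$ and that under this transport $\mathcal C(f,-)$ corresponds to $\mathcal D(F(f),-)$, so that the formula for $F_*(\sigma)$ actually lands in $N^*\mathcal D\bigl(H(F(\gamma),-),H(F(\gamma'),-)\bigr)$. This is where one genuinely uses that $F$ is an isomorphism of normal categories (preservation of epimorphic components), not merely a bijective functor. A more structural alternative, which sidesteps the explicit functor bookkeeping: $\gamma\mapsto F(\gamma)$ is an isomorphism of the regular semigroups $T\mathcal C$ and $T\mathcal D$ (Remark), hence induces an isomorphism $R(T\mathcal C)\to R(T\mathcal D)$ of normal categories; composing it with the isomorphisms $G$ of the theorem above produces an isomorphism $N^*\mathcal C\to N^*\mathcal D$ of normal categories, and unwinding the definitions of $G$ and of $N^*$ identifies it with the map $F_*$ in the statement.
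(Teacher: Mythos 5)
The paper itself gives no argument for this lemma: it is recalled as a known preliminary from Nambooripad's theory of cross-connections (cf.\ \cite{kss}), so there is no internal proof to compare against. Your transport-of-structure argument is the standard one and its skeleton is sound: $\gamma\mapsto F(\gamma)$ is a bijection $T\mathcal C\to T\mathcal D$ with vertex $F(c_\gamma)$, the identity $F(\gamma * g^{\circ})=F(\gamma)*F(g)^{\circ}$ (which does genuinely use that a normal-category isomorphism preserves inclusions, retractions, and hence normal factorizations and epimorphic components) gives the natural bijections $H(\gamma,c)\to H(F(\gamma),F(c))$, and then functoriality, the inverse $(F^{-1})_*$, and the identification $\widehat{F_*(\sigma)}=F(\hat\sigma)$ are formal. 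The closing alternative via $G:R(T\mathcal C)\to N^*\mathcal C$ and the semigroup isomorphism $T\mathcal C\to T\mathcal D$ is also a legitimate route.

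One step is stated incorrectly, though it is repairable with material you already have: inclusions in $N^*\mathcal C$ are \emph{not} the morphisms whose hat is an inclusion $j$ of $\mathcal C$. The subobject relation in $N^*\mathcal C$ is the subfunctor relation $H(\gamma,-)\subseteq H(\gamma',-)$, the inclusion morphism points from the smaller object $H(\gamma,-)$ to the larger $H(\gamma',-)$, and its hat lives in $\mathcal C(c_{\gamma'},c_{\gamma})$; it is an epimorphism $g$ with $\gamma'\ast g=\gamma$ (in the vector-space model $\mathcal A(V)$, annihilator inclusions correspond to surjections between the underlying image spaces), so your formula $\widehat{F_*(\sigma)}=F\bigl(j^{c_\gamma}_{c_{\gamma'}}\bigr)$ does not even typecheck. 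The correct repair is immediate from your morphism-level computation: since $F(\gamma\ast g^{\circ})=F(\gamma)\ast F(g)^{\circ}$ and $F$ is bijective on hom-sets, $H(\gamma,c)\subseteq H(\gamma',c)$ for all $c$ implies $H(F(\gamma),d)\subseteq H(F(\gamma'),d)$ for all $d$, and $F_*$ sends the subfunctor inclusion to the subfunctor inclusion (with $F$ preserving the epimorphism $g$ realizing it); the same applies to $(F^{-1})_*$, which yields the claimed isomorphism of normal categories. With that correction your proof is complete.
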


	\begin{thm}
		Let $\mathcal C$ be a normal category. Then $\Gamma$ given by
		$$\Gamma=G\circ F_*$$
		then $\Gamma: R(T\mathcal C )\to N^*L(T\mathcal C)$ is an isomorphism and is termed as a connection.
	\end{thm}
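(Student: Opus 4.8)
The plan is to exhibit $\Gamma$ as a composite of two isomorphisms of normal categories that are already at our disposal — the functor $G$ of the preceding Theorem and the functor $F_{*}$ of the preceding Lemma — and then to observe that the class of isomorphisms of normal categories is closed under composition. Thus no genuinely new construction is required beyond choosing the correct $F$ to feed into the Lemma.

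The one input that is not purely formal is the canonical identification $\mathcal C\cong L(T\mathcal C)$. For $c\in v\mathcal C$ let $\epsilon^{c}\in T\mathcal C$ be a normal cone with vertex $c$ and $\epsilon^{c}(c)=1_{c}$, which exists by clause (3) of the definition of a normal category (and is automatically idempotent, since $\epsilon^{c}\cdot\epsilon^{c}=\epsilon^{c}*(1_{c})^{\circ}=\epsilon^{c}$); then $F\colon\mathcal C\to L(T\mathcal C)$ defined on objects by $F(c)=T\mathcal C\,\epsilon^{c}$ and on a morphism $g\colon c\to d$ by the morphism of principal left ideals canonically associated with $g$ (the one induced through the cone composition $*$) is a functor that is a bijection on objects, is full and faithful, and carries the inclusion $j^{c'}_{c}$ to the corresponding inclusion of principal left ideals. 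This is Nambooripad's representation of a normal category as the left-ideal category of its cone semigroup (cf.\ \cite{kss}); if a self-contained account is wanted, the verification of these four clauses is the only laborious step, and it runs parallel to the construction of $G$ above.

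Granting $F$, the Lemma applied with $\mathcal D=L(T\mathcal C)$ gives immediately that $F_{*}\colon N^{*}\mathcal C\to N^{*}L(T\mathcal C)$, $H(\gamma,-)\mapsto H(F(\gamma),-)$, is an isomorphism of normal categories, while the preceding Theorem gives that $G\colon R(T\mathcal C)\to N^{*}\mathcal C$ is an isomorphism of normal categories. Hence the composite $\Gamma=G\circ F_{*}$ — first $G$, sending $\gamma T\mathcal C$ to $H(\gamma,-)$, then $F_{*}$, sending this on to $H(F(\gamma),-)\in vN^{*}L(T\mathcal C)$ — is a well-defined functor $R(T\mathcal C)\to N^{*}L(T\mathcal C)$; on morphisms the two prescriptions (the commuting square of the Theorem defining $G(\lambda)$, and the formula $F_{*}(\eta)=\eta_{F(\gamma)}\,\mathcal D(F(f),-)\,\eta_{F(\gamma')}^{-1}$ of the Lemma) compose, both being expressed through the canonical natural isomorphisms $\eta_{\gamma}\colon H(\gamma,-)\to\mathcal C(c_{\gamma},-)$.

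It then remains only to note that an isomorphism of normal categories is a functor that is bijective on objects and on every hom-set, preserves and reflects the inclusion preorder, sends normal cones to normal cones, and has an inverse of the same kind — and every one of these properties passes to composites. Since $G$ and $F_{*}$ each enjoy them, so does $\Gamma$, with $\Gamma^{-1}=F_{*}^{-1}\circ G^{-1}$; hence $\Gamma\colon R(T\mathcal C)\to N^{*}L(T\mathcal C)$ is an isomorphism of normal categories, which by definition is the connection associated with $\mathcal C$. The whole difficulty is concentrated in the second step — producing and checking the canonical isomorphism $\mathcal C\cong L(T\mathcal C)$; once that is granted, the theorem is a formal consequence of the Lemma and the Theorem above.
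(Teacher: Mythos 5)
Your proposal is correct and is essentially the argument the paper intends: the paper states this theorem without proof (it is recalled from Nambooripad's theory in \cite{kss}), and its content is precisely the composition of the isomorphism $G\colon R(T\mathcal C)\to N^{*}\mathcal C$ of the preceding theorem with the isomorphism $F_{*}$ of the preceding lemma. Your only substantive addition --- making explicit that the lemma must be applied to the canonical isomorphism $F\colon \mathcal C\to L(T\mathcal C)$, $c\mapsto T\mathcal C\,\epsilon^{c}$ (well defined since any two idempotent cones with vertex $c$ and identity component at $c$ are $\mathcal L$-related), and reading $G\circ F_{*}$ in diagrammatic order so that the composite actually has domain $R(T\mathcal C)$ and codomain $N^{*}L(T\mathcal C)$ --- correctly supplies the one ingredient the paper leaves implicit.
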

\begin{rem}
Analogous to the above theorem there is local isomorphism $\Delta :L(T\mathcal C)\to N^* R(T\mathcal C )$ and is termed as the dual connection.
\end{rem}

Let $\mathcal C$ and $\mathcal D$ be normal categories,  
$\Gamma :\mathcal D \to N^*\mathcal C$ and  $\Delta :\mathcal C \to N^*\mathcal D$ are connection and dual connection. Then $\Gamma$ and $\Delta$ induces bi- functors $\Gamma(-,-): \mathcal D\times \mathcal C\to \bf{Set}$ and $ \Delta (-,-): \mathcal C\times \mathcal D\to \bf{Set}$ such that there is a natural bijection
$$\chi: \Gamma(-,-)\to \Delta(-,-).$$

\section{Cross-connections of normal categories}

	Given a connection $\Gamma :\mathcal D \to N^*\mathcal C$, then one can construct a connection from $\mathcal {C}_{\Gamma}$ with $N^*\mathcal D$, where $\mathcal {C}_{\Gamma}$ is an ideal of $\mathcal {C}$.
	\begin{thm} Let $\mathcal C$ and $\mathcal D$ be normal categories and $\Gamma :\mathcal D \to N^*\mathcal C$ be a connection. Then there esists a connection 
		$\Gamma^* :\mathcal {C}_{\Gamma} \to N^*\mathcal D$ such that, for $c\in \mathcal {C}_{\Gamma}$ and $d\in v\mathcal D, \, c\in M\Gamma(d)$ if and only if $d\in M\Gamma^*(c)$. 
	\end{thm}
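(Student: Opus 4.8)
The plan is to read $\Gamma^{*}$ off the bi-functor determined by $\Gamma$ and then to deduce the asserted duality from the symmetry of that bi-functor in its two variables; throughout I would use that a connection is, by definition, a local isomorphism of normal categories, together with the isomorphisms $G\colon R(T\mathcal C)\to N^{*}\mathcal C$ and $F_{*}$ from the earlier results, which let me move representing data between $\mathcal C$ and $\mathcal D$. First I fix the domain. For each $d\in v\mathcal D$ the M-set $M\Gamma(d)\subseteq v\mathcal C$ is, by the standard description of M-sets of objects of a normal dual, the object class of a category ideal of $\mathcal C$ (in particular it is closed under subobjects); so, setting $v\mathcal C_{\Gamma}:=\bigcup_{d\in v\mathcal D}M\Gamma(d)$ and letting $\mathcal C_{\Gamma}$ be the full subcategory of $\mathcal C$ on this class, a union of subobject-closed classes is subobject-closed, hence $\mathcal C_{\Gamma}$ is again a category ideal of $\mathcal C$ and in particular a normal category. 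This identifies the ideal named in the statement.

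Next I define $\Gamma^{*}$. The connection $\Gamma$ yields the bi-functor $\Gamma(-,-)\colon\mathcal D\times\mathcal C\to\mathbf{Set}$ with $\Gamma(d,c)=\Gamma(d)(c)$, where $\Gamma(g,c)$ is the $c$-component of the natural transformation $\Gamma(g)$ and $\Gamma(d,h)=\Gamma(d)(h)$; naturality of the $\Gamma(g)$ makes these commute, so $\Gamma(-,-)$ is a genuine bi-functor. For $c\in v\mathcal C_{\Gamma}$ I put $\Gamma^{*}(c):=\Gamma(-,c)\colon\mathcal D\to\mathbf{Set}$, and for $h\colon c\to c'$ in $\mathcal C_{\Gamma}$ I put $\Gamma^{*}(h):=\Gamma(-,h)$, the natural transformation $\Gamma(-,c)\Rightarrow\Gamma(-,c')$ with components $\Gamma(d,h)$. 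Functoriality of $\Gamma^{*}$ is then immediate from bi-functoriality of $\Gamma(-,-)$.

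The crux — and the step I expect to be the real obstacle — is to show that $\Gamma^{*}$ actually takes values in $N^{*}\mathcal D$ and is a connection. One must verify that for $c\in v\mathcal C_{\Gamma}$ the $\mathbf{Set}$-valued functor $\Gamma(-,c)$ is an object of $N^{*}\mathcal D$, i.e.\ is naturally isomorphic to $H(\delta,-)$ for some normal cone $\delta$ in $\mathcal D$, and that each $\Gamma^{*}(h)$ is then a morphism of $N^{*}\mathcal D$. Since $c\in v\mathcal C_{\Gamma}$, choose $d_{0}$ with $c\in M\Gamma(d_{0})$ together with an element $\xi\in\Gamma(d_{0},c)$ witnessing this; by Yoneda $\xi$ induces a natural transformation $\theta\colon\mathcal D(d_{0},-)\Rightarrow\Gamma(-,c)$, and the work is to show that a suitable choice of $\xi$ makes $\theta$ epimorphic, so that $\Gamma(-,c)$ is presented as a quotient $H(\delta,-)$ with $\delta$ obtained by transporting the identity datum at $d_{0}$. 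For this I would use that $\Gamma$, being a local isomorphism, carries the principal ideal at $d_{0}$ in $\mathcal D$ isomorphically onto the principal ideal at $\Gamma(d_{0})$ in $N^{*}\mathcal C$, and use $G$ and $F_{*}$ to construct and normalise $\delta$ and to see independence of the choices of $d_{0}$ and $\xi$. That $\Gamma^{*}$ is then a connection — a local isomorphism onto an ideal of $N^{*}\mathcal D$ covering $v\mathcal D$ — follows by running the same transport argument in the other variable, the symmetry of $\Gamma(-,-)$ turning the local-isomorphism property of $\Gamma$ on ideals of $\mathcal D$ into the corresponding property of $\Gamma^{*}$ on ideals of $\mathcal C_{\Gamma}$.

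Finally, the equivalence $c\in M\Gamma(d)\iff d\in M\Gamma^{*}(c)$ drops out of the construction: unwinding the M-set, $c\in M\Gamma(d)$ is a condition on the functor $\Gamma(d)$ around $c$ which, after the identifications above, amounts to the non-degeneracy of the bi-functor value $\Gamma(d,c)=\Gamma(d)(c)$; since $\Gamma^{*}(c)(d)=\Gamma(d,c)$ by definition, the very same condition, read from the $\mathcal D$-side, says exactly $d\in M\Gamma^{*}(c)$. In particular $M\Gamma^{*}(c)\neq\emptyset$ precisely when $c\in v\mathcal C_{\Gamma}$, which confirms that $\mathcal C_{\Gamma}$ is the correct domain. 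Everything outside the normality verification in the previous paragraph is bookkeeping with $\Gamma(-,-)$, $G$ and $F_{*}$.
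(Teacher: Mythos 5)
Note first that the paper does not prove this theorem at all: it is recalled verbatim from Nambooripad's cross-connection theory (cf.\ \cite{kss}), so there is no in-paper argument to compare routes with; your proposal has to stand on its own, and as it stands it is a plan with the decisive steps missing and one concrete false claim. The false claim is in your first paragraph: an M-set $M\Gamma(d)$ is \emph{not} the object class of an ideal and is not closed under subobjects. The paper's own example shows this: in $\mathcal S(V)$ one has $M\rho^e=\{A\subseteq V: A\oplus N_e=V\}$, the complements of a fixed null space, and a proper subspace of a complement is never again a complement. The conclusion you want (that $\mathcal C_\Gamma$, the full subcategory on $\bigcup_d M\Gamma(d)$, is an ideal of $\mathcal C$) is true, but it is not a formality about unions of subobject-closed classes; it already uses the local-isomorphism property of $\Gamma$: given $c\in M\Gamma(d)$ and $c'\subseteq c$, one must manufacture a cone with vertex $c'$ (e.g.\ $\gamma(c,d)*u$ with $u=\gamma(c,d)(c)^{-1}q$ for a retraction $q:c\to c'$), check it lies in the principal ideal of $\Gamma(d)$ in $N^*\mathcal C\cong R(T\mathcal C)$, and then pull it back along $\Gamma|_{\langle d\rangle}\colon\langle d\rangle\to\langle\Gamma(d)\rangle$ to get $d'\subseteq d$ with $c'\in M\Gamma(d')$.

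The second, larger gap is the one you yourself flag as ``the real obstacle'' and then do not carry out: showing that $\Gamma(-,c)$ is an object of $N^*\mathcal D$, i.e.\ that there is a normal cone $\gamma^*(c,d)$ in $\mathcal D$ with vertex $d$ and $\Gamma(-,c)\cong H(\gamma^*(c,d),-)$, that the result is independent of the choice of $d$ with $c\in M\Gamma(d)$, and that $c\mapsto\Gamma^*(c)$ is a local isomorphism satisfying the covering condition. This is precisely where essentially all of the content of the theorem sits in Nambooripad's development: the dual cone has to be built component by component out of $\Gamma$ applied to morphisms of $\mathcal D$ and epimorphic parts of the cones $\gamma(\cdot,\cdot)$, and the cone identities, normality, and well-definedness must be verified; a Yoneda map $\mathcal D(d_0,-)\Rightarrow\Gamma(-,c)$ induced by a witness $\xi$ does not by itself produce an $H$-functor, since objects of $N^*\mathcal D$ are not arbitrary quotients of hom-functors. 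Finally, the closing equivalence does not ``drop out'': $c\in M\Gamma(d)$ says that the cone $\gamma(c,d)$ determined by $\Gamma(d)=H(\gamma(c,d),-)$ has an isomorphism component at $c$, which is not a mere ``non-degeneracy'' condition on the set $\Gamma(d,c)$ (one needs, e.g., an idempotent cone in $\Gamma(d,c)$ with vertex $c$), and the symmetric reading of that condition on the $\mathcal D$-side presupposes that the dual cones have already been constructed — so your last paragraph assumes what was to be proved.
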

	
	The connection $\Gamma^* :\mathcal {C}_{\Gamma} \to N^*\mathcal D$ is called the  dual connection of $\Gamma :\mathcal D \to N^*\mathcal C$. 
	\begin{defn}Let $\mathcal C$ and $\mathcal D$ be normal categories. A cross-connection is a  triple  $(\mathcal D, \mathcal C,\Gamma)$ where $\Gamma :\mathcal D \to N^*\mathcal C$ is a local isomorphism such that for every $c\in v\mathcal C$ there is some $d\in v\mathcal D$ such that $c\in M\Gamma(d)$.
	\end{defn}
		
	Let $\mathcal C$ and $\mathcal D$ be normal categories and $\Gamma :\mathcal D \to N^*\mathcal C$ a connection. Consider 
	$$E_{\Gamma}= \{(c,d): c\in vC_{\Gamma}, d\in v\mathcal C \,\text{and}\, c\in M\Gamma(d)\}$$
	and $E_{\Gamma^*}$ analogously. 
	Then for each $(c,d)\in E_{\Gamma}, \gamma(c,d)$ denote the unique cone in $\mathcal C$ such that 
	$$c_{\gamma(c,d)} =c\quad\text{and}\quad \Gamma(d)=H(\gamma(c,d),-)$$
	Also every cone on $\Gamma(c,d)$ is expressed as $\gamma(c',d)\ast f^{\circ}$ with $f\in \mathcal{C}(c',c)$ and every cone on $\Gamma^*(c,d)$ is $\gamma^*(c,d')\ast g^{\circ}$ with $g\in \mathcal{D}(d',d)$
	
	Given  
	$(\mathcal D, \mathcal C,  \Gamma)$ a cross-connection. Then for each $(c,d)\in \mathcal C\times \mathcal D$ there is a natural bijection  
	$$\chi_{\Gamma(c,d)}: \Gamma(c,d)\to \Gamma^*(c,d)$$ 
	and a pair of cones 
	$(\gamma, \gamma^*)$ with $\gamma\in \Gamma(c,d)$ is a linked pair if $\gamma^*= \chi_{\Gamma(c,d)}(\gamma)$.
	
	The collection of these linked pair of cones together with the binary composition defined by
	$$(\gamma,\gamma^*) \circ (\delta, \delta^*)=(\gamma\delta, \delta^* \gamma^*) $$
	is a semigroup and is called the corss-connection semigroup which is denoted by $\tilde{S}\Gamma$.

	\begin{exer}
		Fully describe the  cross-connection representation of a regular semigroup.\\
		Hint:\,step-1. Consider the category $L(S)$ with
		\begin{align}
			vL(S) &=\{Se: e\in E(S)\}\\
			Hom(Se,Sf)&= \{\rho(e,u,f): u\in eSf\}
		\end{align}
		where $\rho(e,u,f)$ are right translations. Similarly the category $R(S)$ and show that these are normal categories.\\
		step-2. Describe the local isomorohisms $\Gamma S:R(S)\to N^*L(S)$ and $\Gamma^* S:L(S)\to N^*R(S)$ and the bijection $\chi_S: \Gamma S(-,-)\to \Gamma^*S(-,-)$ whose components are given by
		$$\chi_S(Se,fS): \rho^f\ast \rho(f,u,e)^{\circ}\mapsto \lambda^e\ast \lambda(e,u,f)^{\circ}$$ 
	\end{exer}

\section{Cross-connections of singular endomorphisms $\it {Sing(V )}$ on $V$}

It is well known that the singular endomorphisms on any vector space $V$ is a  regular semigroup with the multiplication as composition of morphisms and is written as $\it {Sing(V )}$.
The following properties of ideal categories of $\it {Sing(V )}$ can easily be observed (see cf.\cite{paa}).

\begin{itemize}
	\item The category of principal left ideals of $\it {Sing(V )}$
	is characterized as the subspace category $\mathcal S(V)$ of proper subspaces(image spaces) of $V$ with linear transformations as morphisms and the principal right ideals of $\mathcal S(V)$ are also characterized by subspaces (null spaces ) of $V$.
	
	\item The normal dual of the subspace category is characterized in terms of the algebraic dualspace of $V$ and call it the annihilator category $\mathcal A(V)$. When $V$ is finite dimensional, it is shown that the annihilator category $\mathcal A(V)$ is isomorphic to the category 
	$\mathcal S(V^*)$ of proper subspaces of $V^*$ where $V^*$ is the algebraic dual space of $V$.	
\end{itemize}
	
A cone $\gamma \in \mathcal S(V)$ is certain mapping from $v\mathcal S(V) \to \mathcal S(V)$. A cone $\gamma $ is called normal if at least one component of $\gamma$ is an isomorphism. The category $\mathcal S(V)$ is a normal category and its set of all normal cones under the binary operation of cone composition forms the regular semigroup $\mathcal{TS}(V)$ (cf.\cite{paa}). 
Also, given an element $\alpha\in \it {Sing(V )}$, we have a normal cone $\rho^{\alpha}$ called the principal normal cone with vertex $Im\,{\alpha}$ and the map 
$$\alpha \mapsto \rho^{\alpha}$$
is an isomorphism of regular semigroups.  

\begin{prop}
	Let $V$ be a vector space. The semigroup of all normal cones $\mathcal{TS}(V)$ in the category $\mathcal S(V)$ of principal left ideals of $\it {Sing(V )}$ is isomorphic to $\it {Sing(V )}$.
\end{prop}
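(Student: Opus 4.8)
The plan is to show that the assignment $\alpha\mapsto\rho^{\alpha}$, recalled above, is the asserted isomorphism $\phi\colon Sing(V)\to\mathcal{TS}(V)$, by checking successively that it is well defined, multiplicative, injective and surjective. Throughout we use that $\rho^{\alpha}$ has vertex $Im\,\alpha$ and that its component at a proper subspace $U$ is the corestriction $\rho^{\alpha}(U)=\alpha|_{U}\colon U\to Im\,\alpha$.

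First I would verify $\rho^{\alpha}\in\mathcal{TS}(V)$. The two cone axioms are immediate, since restriction of linear maps is transitive along inclusions: for $U\subseteq U'$ one has $j^{U'}_{U}\,\rho^{\alpha}(U')=\alpha|_{U}=\rho^{\alpha}(U)$. For normality, as $\alpha$ is singular $\ker\alpha\neq 0$ and $Im\,\alpha\subsetneq V$, so any complement $K$ of $\ker\alpha$ is a proper subspace, hence an object of $\mathcal S(V)$, and $\rho^{\alpha}(K)=\alpha|_{K}\colon K\to Im\,\alpha$ is a linear isomorphism.

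Next, multiplicativity. Applying the cone composition $\gamma^{1}\cdot\gamma^{2}=\gamma^{1}\ast\bigl(\gamma^{2}(c_{\gamma^{1}})\bigr)^{\circ}$ with $c_{\rho^{\alpha}}=Im\,\alpha$ gives $\rho^{\alpha}\cdot\rho^{\beta}=\rho^{\alpha}\ast(\beta|_{Im\,\alpha})^{\circ}$; its vertex is the image $\beta(Im\,\alpha)=Im(\alpha\beta)$ and its component at $U$ is $\alpha|_{U}$ followed by the corestriction of $\beta|_{Im\,\alpha}$, i.e. $u\mapsto\beta(\alpha(u))$, which is exactly $(\alpha\beta)|_{U}=\rho^{\alpha\beta}(U)$. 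Hence $\rho^{\alpha}\cdot\rho^{\beta}=\rho^{\alpha\beta}$, and injectivity follows at once: $\rho^{\alpha}=\rho^{\beta}$ forces $Im\,\alpha=Im\,\beta$ (comparing vertices) and $\alpha|_{U}=\beta|_{U}$ for every proper subspace $U$ (comparing components), and since every vector lies in some proper subspace this gives $\alpha=\beta$ (the cases $\dim V\le 1$, where $Sing(V)=\{0\}$, being trivial).

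The substantive step is surjectivity. Given a normal cone $\gamma$ with vertex $c_{\gamma}$, I would recover a singular endomorphism $\alpha$ from the components of $\gamma$ and then check $\rho^{\alpha}=\gamma$. By the cone axiom the family $\{\gamma(U)\colon U\to c_{\gamma}\}$, $U$ a proper subspace, is coherent under restriction, $\gamma(U_{1})|_{U_{1}\cap U_{2}}=\gamma(U_{1}\cap U_{2})=\gamma(U_{2})|_{U_{1}\cap U_{2}}$; since the proper subspaces cover $V$ and, as soon as $\dim V$ is large enough for any two vectors to span a proper subspace, these components glue to a single well-defined \emph{linear} map $\alpha\colon V\to V$ with $\alpha|_{U}=\gamma(U)$ and $Im\,\alpha\subseteq c_{\gamma}$. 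Normality of $\gamma$, applied to the subspace $K$ on which $\gamma(K)$ is an isomorphism, yields $\alpha(K)=c_{\gamma}$, so $Im\,\alpha=c_{\gamma}$ is proper and $\alpha\in Sing(V)$; and by construction $\rho^{\alpha}(U)=\alpha|_{U}=\gamma(U)$ for all proper $U$, hence $\rho^{\alpha}=\gamma$. The genuine obstacle is precisely this reconstruction: one must check that the components $\gamma(U)$ patch to an everywhere-defined linear endomorphism of $V$, which is where the cone coherence, the normality of $\gamma$, and the dimension bookkeeping enter (the low-dimensional vector spaces requiring a separate, elementary discussion). Everything else reduces to the bookkeeping identity ``restriction of a composite equals the composite of the restrictions'' and to the remark, recorded after the definition of a normal category, that $\gamma\ast f$ is again a normal cone.
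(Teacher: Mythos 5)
Your map and overall strategy are the right ones, and they coincide with what the paper intends: the paper itself offers no argument beyond the preceding remark that $\alpha\mapsto\rho^{\alpha}$ is an isomorphism together with the citation to \cite{paa}, and your verification of well-definedness, of $\rho^{\alpha}\cdot\rho^{\beta}=\rho^{\alpha\beta}$ via the cone composition $\gamma^{1}\ast(\gamma^{2}(c_{\gamma^{1}}))^{\circ}$, and of injectivity is the standard reconstruction of that proof (your order of composition matches the right-action convention used in the cited source, so you do get an isomorphism rather than an anti-isomorphism).

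The genuine problem is the step you defer in parentheses. For surjectivity you glue the components $\gamma(U)$ into a linear map, which works exactly when any two vectors lie in a common proper subspace, i.e.\ when $\dim V\geq 3$ (the cases $\dim V\leq 1$ being trivial). But for $\dim V=2$ this is not a ``separate, elementary discussion'' that can be supplied: surjectivity actually fails. Two distinct lines of $V$ meet only in $\{0\}$, and the component of any cone at $\{0\}$ is forced to be the zero map, so the cone axiom imposes no relation between the components at different lines; hence \emph{any} family of linear maps $\gamma(L):L\to L_{0}$ with at least one isomorphism is a normal cone with vertex $L_{0}$, and most such families are not restrictions of a single linear endomorphism. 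Concretely, for $V=\mathbb{F}_{2}^{2}$ there are $1+3(2^{3}-1)=22$ normal cones in $\mathcal{S}(V)$ but only $16-6=10$ elements of $\it{Sing(V)}$, so $\alpha\mapsto\rho^{\alpha}$ cannot be onto $\mathcal{TS}(V)$. So your proof is complete only under the hypothesis $\dim V\neq 2$ (equivalently $\dim V\geq 3$ in the nontrivial range); as written, the promised low-dimensional patch does not exist, and the honest fix is to record this dimension hypothesis (which the statement in the paper also silently needs) rather than to defer it.
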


A cone $\gamma\in \mathcal{TS}(V)$ defines $H$-functors $H(\gamma,-): \mathcal S(V) \to Set$ and a  category whose objects are $H$-functors $H(\gamma,-):\mathcal S(V)\to Set$ where $\gamma$ is an idempotent normal cone in $\mathcal{TS}(V)$ with appropriate natural transformation as morpmisms. This functor category denoted by $ N^*\mathcal S(V)$ is the normal dual of the category $\mathcal S(V)$.

Let $A$ be any proper subspace of $V$. Define a new category viz., the annihilator category 
$\mathcal A(V)$ , whose objects are 
$$A^{\circ}=\{f\in V^*: f(v)=0 \,\text{for all}\,v\in A\}$$
and morphisms appropriative linear maps.  
It is easy to observe that $\mathcal A(V)$ is a normal category and is  isomorphic to the category $\mathcal S(V^*)$ of proper subspaces of $V^*$ where $V^*$ is the algebraic dual space of $V$  and being a normal category $\mathcal A(V)$ admits normal dual  $N^*\mathcal A(V)$.
 	
\begin{prop}(Proposition 5. cf.\cite{paa}) 
The normal dual $N^*\mathcal S(V)$ of the subspace category is isomorphic to $\mathcal A(V)$. Dually, $N^*\mathcal A(V)$ is isomorphic to $\mathcal S(V)$. The semigroup $\mathcal{TA}(V)$ of normal cones in $\mathcal A(V)$ is isomorphic to $Sing(V )^{op}$.
\end{prop}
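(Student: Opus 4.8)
The plan is to deduce all three isomorphisms from two facts already available: the general isomorphism $R(T\mathcal C)\cong N^*\mathcal C$ of normal categories, and the identification $\mathcal{TS}(V)=T\mathcal S(V)\cong Sing(V)$ realized by $\alpha\mapsto\rho^{\alpha}$. Throughout I shall use that $\mathcal S(V)$ is, up to isomorphism, the category $L(Sing(V))$ of principal left ideals of $Sing(V)$, that principal right ideals of $Sing(V)$ are parametrized by null spaces, and the basic fact that $TL(W)\cong W$ for any regular semigroup $W$ (of which $\mathcal{TS}(V)\cong Sing(V)$ is the instance $W=Sing(V)$).

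For the first assertion, $N^*\mathcal S(V)\cong\mathcal A(V)$: applying $R(T\mathcal C)\cong N^*\mathcal C$ with $\mathcal C=\mathcal S(V)$ and then $T\mathcal S(V)\cong Sing(V)$ gives $N^*\mathcal S(V)\cong R(Sing(V))$, so it suffices to exhibit an isomorphism of normal categories $R(Sing(V))\cong\mathcal A(V)$. On objects, a principal right ideal $\alpha\, Sing(V)$ is determined by $\ker\alpha$ (singular endomorphisms are $\mathcal R$-related precisely when they have equal kernels), and $A\mapsto A^{\circ}$ is a bijection from the proper subspaces of $V$ onto the objects of $\mathcal A(V)$; so one sends $\alpha\,Sing(V)\mapsto(\ker\alpha)^{\circ}$. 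On morphisms, a generating left translation $x\mapsto ux$ of a hom-set of $R(Sing(V))$ is sent to the restriction of the transpose $u^{*}$ between the relevant annihilators. One then checks that this assignment is a functor, a bijection on each hom-set, and carries inclusions to inclusions and retractions to retractions; hence it is an isomorphism of normal categories, and $N^*\mathcal S(V)\cong\mathcal A(V)$.

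For the third assertion, $\mathcal{TA}(V)\cong Sing(V)^{op}$: by the first part $\mathcal A(V)\cong R(Sing(V))$, and since forming principal right ideals of $S$ is (as a normal category) the same as forming principal left ideals of $S^{op}$, i.e.\ $R(S)\cong L(S^{op})$, we obtain $\mathcal{TA}(V)=T\mathcal A(V)\cong T R(Sing(V))\cong T L(Sing(V)^{op})\cong Sing(V)^{op}$ by $TL(W)\cong W$. In the finite-dimensional case the same conclusion follows from $\mathcal A(V)\cong\mathcal S(V^{*})$, which gives $\mathcal{TA}(V)\cong\mathcal{TS}(V^{*})\cong Sing(V^{*})$, together with the rank-preserving anti-isomorphism $\alpha\mapsto\alpha^{*}$ of $Sing(V)$ onto $Sing(V^{*})$. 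Finally, for the second assertion, apply $R(T\mathcal C)\cong N^*\mathcal C$ with $\mathcal C=\mathcal A(V)$: $N^*\mathcal A(V)\cong R(T\mathcal A(V))=R(\mathcal{TA}(V))\cong R(Sing(V)^{op})\cong L(Sing(V))\cong\mathcal S(V)$.

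I expect the main obstacle to be the morphism-level bookkeeping in the first part: verifying that sending the generating left translations to their transposes yields a functor that is bijective on hom-sets and strictly compatible with the subobject structure (inclusions and retractions), so that one obtains an honest isomorphism of normal categories and not merely an equivalence. A related subtlety is checking that the identification $R(S)\cong L(S^{op})$ respects the normal-category structure, since it is precisely this passage that forces $Sing(V)^{op}$ (rather than $Sing(V)$) into the second and third statements; in infinite dimensions this step cannot be routed through $\mathcal S(V^{*})$ and must be argued directly with cones and translations.
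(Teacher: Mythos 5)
The paper itself offers no proof of this proposition: it is quoted from \cite{paa} (Proposition 5 there), so there is no internal argument to compare against, and your attempt has to stand on its own. Your outline for the first assertion is reasonable: combine Theorem 1 ($R(T\mathcal C)\cong N^*\mathcal C$) with Proposition 1 ($\mathcal{TS}(V)\cong Sing(V)$) to get $N^*\mathcal S(V)\cong R(Sing(V))$, and then identify $R(Sing(V))$ with $\mathcal A(V)$ by $\alpha\, Sing(V)\mapsto(\ker\alpha)^{\circ}$ on objects and by transposes of left translations on morphisms. All the substance there is deferred to ``one then checks,'' and you should also handle the edge case of elements such as the zero map, whose null space is not a proper subspace, so the object correspondence with $v\mathcal A(V)$ has to be stated with care; but as a route this part is plausible.

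The genuine gap is the ``basic fact'' you invoke, that $TL(W)\cong W$ for every regular semigroup $W$. This is false: in general $W$ only maps onto the subsemigroup of principal cones $\rho^{a}$ inside $TL(W)$, and a normal cone need not be principal. For the five-element combinatorial Brandt semigroup $B_2$, the semigroup $TL(B_2)$ has seven elements: the cone whose components at both nonzero principal left ideals are isomorphisms is normal but is not of the form $\rho^{a}$. The isomorphism $\mathcal{TS}(V)\cong Sing(V)$ is therefore not an instance of a general principle but a theorem specific to $Sing(V)$ (every normal cone in $\mathcal S(V)$ is principal). Consequently your proof of the third assertion, which applies the ``fact'' to $W=Sing(V)^{op}$, amounts to asserting that every normal cone in $L(Sing(V)^{op})\cong R(Sing(V))\cong\mathcal A(V)$ is principal --- which is exactly the content of $\mathcal{TA}(V)\cong Sing(V)^{op}$, so the argument begs the question; a direct argument with cones in the annihilator category is required, and in infinite dimensions this is precisely where the work lies in \cite{paa}, since not every endomorphism of $V^{*}$ is a transpose. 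Your derivation of the second assertion is routed through the third and inherits the same gap. The finite-dimensional alternative you sketch (via $\mathcal A(V)\cong\mathcal S(V^{*})$ and the anti-isomorphism $\alpha\mapsto\alpha^{*}$) is sound, but it covers only that case, whereas the proposition as stated is not restricted to finite dimension.
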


Note that given normal categories  $\mathcal S(V)$ and $\mathcal A(V)$ there exists local isomorphisms 
\begin{gather*}
	\Gamma: \mathcal A(V)\to  N^*\mathcal S(V) \quad\text{and}\\
	\Delta : \mathcal S(V)\to N^*\mathcal A(V) \quad \text{such that}
\end{gather*}
$(\mathcal A(V),N^*\mathcal S(V),\Gamma)$ and $(\mathcal S(V),N^*\mathcal A(V), \Delta)$ are connections and dual connections repectively.\\

Hence, in the light of the above proposition we have 
$\Gamma: \mathcal A(V)\to \mathcal A(V)$ and $\Delta : \mathcal S(V)\to \mathcal S(V)$ with 
connections and dual connections $(\mathcal A(V),\mathcal A(V),\Gamma)$ and $(\mathcal S(V),\mathcal S(V), \Delta)$ respectively. 

\begin{thm} (Theorem 6. cf.\cite{paa}) Let $V$ be a finite diamensional vector space over $K$, then $N^* \mathcal S(V)$ is isomorphic as a normal category to $\mathcal S(V^*)$ 
\end{thm}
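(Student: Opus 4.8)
The plan is to realise the required isomorphism as a composite through the annihilator category $\mathcal A(V)$. By the preceding Proposition (Proposition~5 of \cite{paa}), which identifies the normal dual of the subspace category, one has an isomorphism of normal categories $N^{*}\mathcal S(V)\cong\mathcal A(V)$ valid for \emph{any} vector space $V$; hence it suffices to construct an isomorphism of normal categories
$$\Phi\colon\mathcal A(V)\longrightarrow\mathcal S(V^{*})$$
under the hypothesis $\dim_{K}V<\infty$. The point of this reduction is that the entire weight of the finite-dimensionality assumption is then concentrated in this single step.

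First I would set up the object correspondence. Every object of $\mathcal A(V)$ is by definition already a subspace $A^{\circ}\subseteq V^{*}$, so the natural candidate is to let $\Phi$ send $A^{\circ}$ to $A^{\circ}$ viewed as an object of $\mathcal S(V^{*})$. The fact to verify — and this is precisely where finite-dimensionality enters — is that $A\mapsto A^{\circ}$ is an inclusion-reversing bijection between the subspace lattice of $V$ and that of $V^{*}$: one combines $\dim A+\dim A^{\circ}=\dim V$ with the fact that the canonical map $V\to V^{**}$ is an isomorphism (equivalently $\dim V<\infty$) to conclude that $U\mapsto{}^{\circ}U=\{v\in V: f(v)=0\ \forall f\in U\}$ is a two-sided inverse; in particular $\Phi$ is a bijection on objects. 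In the infinite-dimensional case this map is injective but not surjective onto $\mathrm{Sub}(V^{*})$, which is exactly why the theorem cannot be extended there. A small amount of bookkeeping is needed here to reconcile the distinguished object classes — the "proper/image" subspaces appearing as objects of $\mathcal A(V)$ against the proper subspaces of $V^{*}$ — but this is purely a matter of conventions.

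Next I would extend $\Phi$ to morphisms and verify it respects structure. Under the identification $\mathcal A(V)\cong N^{*}\mathcal S(V)$ the morphisms of $\mathcal A(V)$ between $A^{\circ}$ and $B^{\circ}$ are already the (appropriate) linear maps between these subspaces of $V^{*}$, so on morphisms $\Phi$ is essentially the identity and fullness and faithfulness are immediate. It then remains to check that $\Phi$ carries inclusions to inclusions (an inclusion $A^{\circ}\subseteq B^{\circ}$ in $\mathcal A(V)$ corresponds to $B\subseteq A$ in $V$ and is sent to the literal subspace inclusion $A^{\circ}\subseteq B^{\circ}$ in $\mathcal S(V^{*})$), retractions to retractions and normal cones to normal cones; since $\Phi$ is fully faithful, bijective on objects and inclusion-preserving, it is an isomorphism of categories with subobjects, and transporting normal cones shows it is an isomorphism of normal categories. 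Composing with $N^{*}\mathcal S(V)\cong\mathcal A(V)$ yields the asserted isomorphism $N^{*}\mathcal S(V)\cong\mathcal S(V^{*})$.

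The main obstacle is the object-level statement: establishing that the annihilator operation is a genuine lattice anti-isomorphism $\mathrm{Sub}(V)\to\mathrm{Sub}(V^{*})$, i.e.\ in particular that it is surjective. This is the one place where finite-dimensionality is indispensable rather than cosmetic, and where the argument genuinely breaks in the infinite-dimensional setting. The downstream verifications — matching morphisms, inclusions, retractions and normal cones — are routine precisely because $\Phi$ moves almost none of the underlying linear-algebraic data; the only delicate point beyond surjectivity is pinning down the exact dictionary between the two categories' distinguished object classes.
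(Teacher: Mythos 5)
Your proposal is correct and follows essentially the same route the paper (and its source \cite{paa}) indicates: factor the isomorphism through the annihilator category, using Proposition~5 to identify $N^{*}\mathcal S(V)\cong\mathcal A(V)$ in general, and then the annihilator correspondence $A\mapsto A^{\circ}$ — a bijection onto subspaces of $V^{*}$ precisely because $\dim V<\infty$ — to identify $\mathcal A(V)$ with $\mathcal S(V^{*})$. The paper itself states the theorem only by citation, but your decomposition, including the observation that finite-dimensionality is needed exactly for surjectivity of the annihilator map and that the morphism-level checks are routine, matches the intended argument.
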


Now we proceed to describe all cross-connections between the nornal categories 
$\mathcal A(V),\,\mathcal S(V)$ and construct the semigroups arising from these categories.

\begin{defn}(Proposition 7.cf. \cite{paa})
	The $M$- set of the cone $\rho^e\in \mathcal S(V)$ is given by 
	$$M\rho^e= MH(\rho^e,-)= M((N_e)^{\circ})	= \{A\subseteq V : A \oplus N_e=V\}.$$
\end{defn}

\begin{defn}
A cross-connection from $\mathcal A(V)$ to $\mathcal S(V)$ is a triplet 
$(\mathcal A(V),\mathcal S(V), \Gamma)$ where $\Gamma: \mathcal A(V)\to \mathcal A(V)$ is a local isomorphism such that for every $A\in v\mathcal S(V)$, there
is some $Y\in  v\mathcal A(V)$ such that $A\in M(\Gamma (Y))$.
\end{defn}

 

Let $\epsilon$ be an automorphism of a vector space $V$, then the transpose 
$\epsilon^* :V^*\to V^*$ is also an automorphism. Hence define a functor 
$\Gamma_{\epsilon}$ on $\mathcal A(V)$ as follows; for a proper subspace $Y$ of $V^*$ and for 
$w^*: Y \to Z$  in $\mathcal A(V)$, let
$$\Gamma_{\epsilon}(Y)=\epsilon^* (Y)\quad\text{and}\quad \Gamma_{\epsilon}(w^*)=(\epsilon^{-1})^*w^*(\epsilon)^*=(\epsilon w\epsilon^{-1})^*$$

and in a similar way we have $(\Gamma^*)_{\epsilon}$ on  $\mathcal S(V)$ given by
$$(\Gamma_{\epsilon}^*)(A)=\epsilon(A)\quad\text{and}\quad (\Gamma_{\epsilon}^*)(f)=
\epsilon(f)=\epsilon^{-1}(f)\epsilon$$
thus we have the cross-connection $(\mathcal A(V),\mathcal S(V), \Gamma_{\epsilon})$. \\

Also it can be seen that given cross-connection 
$(\mathcal A(V),\mathcal S(V), \Gamma_{\epsilon})$ there are unique bifunctors 
$\Gamma_{\epsilon}(-,-):\mathcal A(V)\times \mathcal S(V)\to {\bf Set}$ given by, for all 
$(A,Y) \in \mathcal A(V)\times \mathcal S(V^*)$ and $(f,w^*): (A,Y)\to (B,Z)$ 
\begin{gather*}
	\Gamma_{\epsilon}(A,Y)= \{\alpha\in \mathcal{T}V : V\alpha\subseteq A\,\text{and}\, (A\alpha)^{\circ}\subseteq \Gamma_{\epsilon}(Y)\}\\
	\Gamma_{\epsilon}(f,w^*) :\alpha\mapsto (y\alpha)f=y((\alpha f)\quad\text{where $y$ is given by}\,y^*=\Gamma_{\epsilon}(w^*) 
\end{gather*}
and bifunctor $\Gamma_{\epsilon}^*(-,-): \mathcal S(V) \times \mathcal A(V)\to {\bf Set}$. \\

Moreover these bifunctors $\Gamma_{\epsilon}(-,-)$ and $\Gamma_{\epsilon}^*(-,-)$ are connected by a natural bijection  $\chi_{\Gamma_{\epsilon}}$  given by 
$$\chi_{\Gamma_{\epsilon}}(A,Y):\alpha\mapsto {\epsilon^{-1}}\alpha \epsilon $$
and the natural bijection $\chi_{\Gamma_{\epsilon}}$ provides a pair of cones 
$(\alpha, \, \beta)$ where  
$$\beta=\chi_{\Gamma_{\epsilon}}(\alpha)={\epsilon^{-1}}\alpha \epsilon .$$

The cross-connection semigroup associted with this cross-connections is given by 
$$\tilde{S}\Gamma_{\epsilon}= \{(\alpha, \epsilon^{-1}\alpha\epsilon) \quad \text{such that}\, \alpha\in 
\it {Sing(V )}  \}.$$
Further we have the following theorem 
\begin{thm} (cf. \cite{paa} Theorem 8)
	Every cross-connection semigroup arising from the cross-connections between the categories 
	$\mathcal A(V)$ and $\mathcal S(V)$ is isomorphic to $\it {Sing(V )}$. 
\end{thm}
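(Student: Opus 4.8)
The plan is to prove the theorem in two stages: first determine which local isomorphisms can actually serve as a cross-connection between $\mathcal A(V)$ and $\mathcal S(V)$, and then read off the associated semigroup in each case.

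\emph{Stage 1: classification of the cross-connections.} By definition a cross-connection between $\mathcal A(V)$ and $\mathcal S(V)$ is a local isomorphism $\Gamma\colon\mathcal A(V)\to N^*\mathcal S(V)$ satisfying the $M$-set covering condition. Using the identification $N^*\mathcal S(V)\cong\mathcal A(V)$ recorded above (and, in the finite-dimensional case, $N^*\mathcal S(V)\cong\mathcal S(V^*)$), I would treat $\Gamma$ as a local isomorphism of $\mathcal A(V)$ with itself. The heart of the argument is to show that every such $\Gamma$ is of the form $\Gamma_\epsilon$ for some automorphism $\epsilon$ of $V$ (unique up to a nonzero scalar). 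Since a local isomorphism is $v$-surjective and restricts to isomorphisms on the principal ideals, and since $\mathcal{TA}(V)\cong Sing(V)^{\mathrm{op}}$, the functor $\Gamma$ determines, via its action on objects and on normal cones, an automorphism of the lattice of subspaces and hence of the semigroup $Sing(V)$. I would then appeal to the rigidity of these structures — a statement of Fundamental Theorem of Projective Geometry type for the subspace lattice, together with the classical description of the automorphisms of $Sing(V)$ as conjugations by the units $GL(V)$ (the linear analogue of $\mathrm{Aut}(T_X)$ being induced by $\mathrm{Sym}(X)$) — to extract $\epsilon$, and finally compare $\Gamma$ with $\Gamma_\epsilon$ on objects ($Y\mapsto\epsilon^*(Y)$) and on morphisms ($w^*\mapsto(\epsilon w\epsilon^{-1})^*$) to identify them. \textbf{This is the step I expect to be the main obstacle}: turning ``$\Gamma$ induces an abstract automorphism'' into ``$\Gamma$ is $\epsilon$-conjugation'', and checking that the $M$-set covering condition is automatic for such $\Gamma$.

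\emph{Stage 2: the semigroup of $\Gamma_\epsilon$.} Once $\Gamma=\Gamma_\epsilon$, the excerpt already supplies $\chi_{\Gamma_\epsilon}(\alpha)=\epsilon^{-1}\alpha\epsilon$ and hence the linked-pair set $\tilde S\Gamma_\epsilon=\{(\alpha,\epsilon^{-1}\alpha\epsilon):\alpha\in Sing(V)\}$. I would define $\phi\colon\tilde S\Gamma_\epsilon\to Sing(V)$ by $\phi(\alpha,\epsilon^{-1}\alpha\epsilon)=\alpha$; this is a bijection because $\alpha\mapsto(\alpha,\epsilon^{-1}\alpha\epsilon)$ is a two-sided inverse, the second coordinate of a linked pair being determined by the first through $\chi_{\Gamma_\epsilon}$. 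To see that $\phi$ is a homomorphism, expand $(\gamma,\gamma^*)\circ(\delta,\delta^*)=(\gamma\delta,\delta^*\gamma^*)$: the first coordinate is the cone product in $\mathcal{TS}(V)$, which corresponds under $\alpha\mapsto\rho^\alpha$ to the product in $Sing(V)$, while the second is the cone product in $\mathcal{TA}(V)\cong Sing(V)^{\mathrm{op}}$; a short computation then gives $(\alpha,\epsilon^{-1}\alpha\epsilon)\circ(\beta,\epsilon^{-1}\beta\epsilon)=(\alpha\beta,\epsilon^{-1}(\alpha\beta)\epsilon)$, so $\phi$ preserves products. This verification uses nothing about $\epsilon$ beyond its invertibility, so it applies uniformly to every cross-connection produced in Stage 1.

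\emph{Conclusion.} Combining the two stages, an arbitrary cross-connection between $\mathcal A(V)$ and $\mathcal S(V)$ is some $\Gamma_\epsilon$, and its cross-connection semigroup is $\tilde S\Gamma_\epsilon\cong Sing(V)$ via the first-coordinate projection $\phi$. As a structural alternative for the closing step, once Stage 1 is in hand one may observe that conjugation by $\epsilon$ is an isomorphism of cross-connections from $\Gamma_\epsilon$ to the cross-connection $\Gamma_{1_V}$ intrinsic to $Sing(V)$, and that passage to the cross-connection semigroup is functorial; this again yields $\tilde S\Gamma_\epsilon\cong\tilde S\Gamma_{1_V}\cong Sing(V)$. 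Either way, the theorem reduces to Stage 1, which carries essentially all the weight.
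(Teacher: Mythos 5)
Your two-stage route is essentially the one the paper itself follows: the paper (quoting \cite{paa}) takes as given that every cross-connection between $\mathcal A(V)$ and $\mathcal S(V)$ is of the form $\Gamma_{\epsilon}$ for an automorphism $\epsilon$ of $V$, and then identifies $\tilde{S}\Gamma_{\epsilon}=\{(\alpha,\epsilon^{-1}\alpha\epsilon):\alpha\in Sing(V)\}$ with $Sing(V)$ via the first-coordinate projection, which is exactly your Stage 2. Your Stage 1 is precisely the part this paper defers to \cite{paa} rather than proving; the one caution in your sketch is that rigidity results of fundamental-theorem-of-projective-geometry type and the description of automorphisms of $Sing(V)$ a priori yield semilinear rather than linear bijections, so extracting a genuinely linear $\epsilon$ needs extra care, although conjugation by such a bijection still normalizes $Sing(V)$ and the first-projection isomorphism, hence the theorem's conclusion, is unaffected.
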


\section{Bundles and vector bundles}
	A ${\it  fiber \, bundle\,}$ ( simply  a bundle) is a triple $\eta=(E,p,B)$ is where $p:E\to B$ is a map. The space $B$ is called the base space and the space $E$ is called total space and the map $p$ is called the projection of the bundle. For each $b\in B$, the space $p^{-1}(b)$ is called the ${\it  fibre}$ of the bundle over $b\in B$.
Intuitively a bundle is regarded as a union of fibres $p^{-1}(b)$ for $b\in B$ parametrized by $B$ and 'glued together' by the topology of the space $E$.

\begin{exam}
	A covering space is a(continuous, surjective) map p : X $\rightarrow$ Y such that for every y $\in$ Y there exist an open neighborhood U containing y such that $p^{-1}$(U) is homeomorphic to a disjoint union of open sets in X, each being mapped homeomorphically onto U by p .  Hence it is a  fibre bundle such that the bundle projection is a local homeomorphism.
\end{exam}	

\begin{exam}
	Given any space $B$, a product bundle over $B$ with fibre $F$ is the bundle $(B\times F ,p,B)$ where $p$ is the projection on the first factor.
\end{exam}	

\begin{defn} A bundle $\eta '=(E',p',B')$	is a subbundle of $\eta=(E,p,B)$ provided $E'$ is a subspace of $E$, $B'$ is a subspace of $B$ and $p'=p\vert \, E'\,:\,E'\to B'$.	
\end{defn}

\begin{exam}
	The tangent bundle over $S^n$ denoted as $(T,p,S^n)$ and the normal bundle $(N,q, S^n)$ are two subbundles of the product bundle $(S^n\times R^{n+1},p, S^n)$ whose total space are defined by the relation $(b,x)\in T$ if and only if the inner product $\langle b,x\rangle=0$ and by $(b,x)\in N$ if and only if $x=kb$ for some $k\in R$.
	
\end{exam}

Two bundles $\eta$ and $\zeta $ over $B$ are locally isomorphic if for each $b\in B$  there exists an open neighbourhood $U$ of $b$ such that $\eta\vert U$ and $\zeta\vert U $ are $U$-isomorphic.

\begin{defn} Consider the bundle  $(E,p,B)$. 	A map $s: B\to E$ is called a section (also cross section) of the bundle $(E,p,B)$  if $p\circ s =id_B$ 
\end{defn}

A space $F$ is a fibre of a bundle  $(E,p,B)$ provided every fibre $p^{-1}(b)$ for $b\in B$ is homoeomorphic to $F$.  A bundle  $(E,p,B)$ is trivial with fibre $F$ provided  $(E,p,B)$ is $B$-isomorphic to the product bundle $(E\times F,p,B)$.

	\begin{defn} A bundle $\eta $ over $B$ is locally trivial with fibre $F$ if $\eta $ is locally isomorphic with the product bundle $(B\times F,p,B)$ 
\end{defn}

A fibre bundle with fibre $F$ is also written as $(E,p,B,F)$, where $E,\,B$ and $F$ are topological spaces, and $p:E\to B$ is a continous surjection satisfying local trivality condition.  

A bundle morphism is a pair of  fibre preserving map between two bundles $\eta=(E,p,B)$ and $\eta'=(E',p',B')$ given by 
$$(u,f):(E,p,B)\to (E',p',B')$$
where $u:E\to E'$ and $f:B\to B'$ such that $p'u=fp$, ie., the diagram of arrows commutes.
In particular when $\eta=(E,p,B)$ and $\eta'=(E',p',B)$ bundles over $B$, then the bundle morphism 
$u:(E,p,B)\to (E',p',B)$ is a map $u: E\to E'$ such that $p=p'u$.

\begin{exam} Let $(E',p',B')$	is a subbundle of $(E,p,B)$ . If $f:B'\to B$ and $u:E'\to E$ are inclusion maps, then $(u,f): (E',p',B')\to (E,p,B)$ is a bundle morphism.
\end{exam}

\begin{defn} 
	The category of bundles, denoted ${\bf BUN}$, has as its objects all bundles $(E,p,B)$ and as morphisms $(E,p,B)$ to $(E',p',B')$ the set of all bundle morphisms.
\end{defn}
	For each space $B$, the sub category of bundles over $B$, denoted as ${\bf BUN_B}$ has objects bundles with base $B$ and $B$-morphisms as its morphisms.

\subsection{Vector bundles} A  bundle with an additional vector space structure on each fibre is called a vector bundle.
\begin{defn}
	A k- dimensional vector bundle over a field $F$ is a bundle $(E,p,B)$ together with the structure of a k- dimensional vector space on each fibre $ p^{-1}(b)$ such that the following local triviality condition is satisfied. Each point of B has an open neighborhood U and a U- isomorphism h: U $\times$ $F^{k}$ $\rightarrow$ $p^{-1}(U)$ such that the restriction b $\times$ $F^{k}$ $\rightarrow$ $p^{-1}(b)$ is a vectorspace isomorphism for each $b \in U$. 
\end{defn}

\begin{exam}
	If $M$ is a smooth manifold and $k$ is a nonnegative integer then 
	$$p:M\times \mathbb{R}^k\rightarrow M$$
	is a real vector bundle of rank $k$ over $M$ and is a trivial vector bundle.
\end{exam}

  \begin{exam}
	Let $M$ be a smooth $n-$ manifold, the collection of 
	tangent vectors $TM$ of $M$ is a vector bundle of rank $n$ over $M$. As a set it is given by disjoint union of tangent spaces of $M$.
	Elements of $TM$ can be thought of as a pair $(x,v)$ where $x$ is a point in $M$ and $v$ is a tangent vector to $M$ at $x$. There is a natural projection $\pi : TM \rightarrow M$ denoted $\pi(x,v) = x$ and the vector bundle is $(TM,\pi, M)$ where each fibre is the tangent space at some point of $M$. \\
	The cotangent bundle may describe in a similar way and is dented by $T^*M$.
 \end{exam}

\begin{exam}
	The tangent bundle over $S^n$, denoted $(T, p, S^n)$, and the normal bundle over $S^n$, denoted $(N,q, S^n)$ are two subbundles of the product bundle $(S^n\times R^{n+1}, p, S^n)$ whose total spaces are defined by the relation $(b,x)\in T$ if and only if the inner product $\langle b,x\rangle=0$ and by $(b,x)\in N$ if and only if $x=kb$ for some $k\in R$.
\end{exam}
\begin{defn}
	Let $p:V\to M$ is a vector bundle. A section of $p$ or $V$ is a smooth map $s:M\to V$ such that $p\circ s=id_M$ ie., $s(x)\in V_x$ for all $x\in M$.
\end{defn}
Note that if $s$ is a section then $s(M)$ is an embedded submanifold of $V$.

\begin{defn}
	Suppose $p:V\to M$ and $q:V'\to N$ are two real vector bundles. 
	\begin{itemize}
		\item{(i)} A smooth map $\tilde{f}:V\to V'$ is a vector bundle morphism if and only if $\tilde{f}$ decends to a map $f:M\to N$ such that $q\tilde{f}=fp$, ie., the diagram below commutes 
		\end{itemize}
	
	\begin{center}
		\begin{tikzpicture}
		\fill (2.4,2.4)  node[right] {$V'$};
		\fill (0.3,2.4)  node[left] {$V$};
		\fill (2.4,0.3)  node[right] {$N$};
				
		\fill (0.3,0.3) node[left] {$M$};
		\draw[->, very thick] (2.6,2.2) -- (2.6,.5)node[midway, right]{$q$};
		\draw[->, very thick] (0,2.2) -- (0,.5)node[midway, left]{$p$};
		\draw[->,  very thick] (.3,0.4) -- (2.3,0.4)node[midway, above]{$f$};
		\draw[->, very thick] (.3,2.5) -- (2.3,2.5)node[midway, above]{$\tilde{f}$};
		\end{tikzpicture}
	\end{center}
	and the restriction $\tilde{f}:V_x\to V_{f(x)}$ is linear for all $x\in M$.
		\begin{itemize}
			\item{(ii)} If $p:V\to M$ and $q:V'\to M $ are vector bundles, a vector bundle morphism $\tilde{f}:V\to V'$ is an isomorphism of vector bundles if $q \tilde{f}=p$. If such an isomorphism exists then $V$ and $V'$ are said to be isomorphic.
		\end{itemize}
	\end{defn}
	
		\begin{rem} The real line bundle $V\to S^1$ given by the infinite Mobious band is not 	  
		isomorphic to the trivial line bundle $S^1\times \mathbb{R} \to S^1$.
	\end{rem}
		
	\begin{rem}Note that all natural operations on vector spaces  such as taking qoutient vector spaces, dual vector spaces, direct sum of vector spaces, tensor product of vector spaces and exterior powers are all carry over to vector bundles.
	\end{rem}

\section{Category of fibers of a vector bundle and semigroup amalgam}
Now we consider the category whose objects are fibers of a vector bunles with morphisms as fiber maps. Since each fiber is a vector space $V_{\alpha}$, we have an associated endomorphism semigroup $\it {Sing(V_{\alpha} )}$ and cross-connection semigroup $\tilde{S_{\alpha}}\Gamma_{\epsilon}$. In the following we construct the semigroup amalgam of these cross-connection semigroups and thus obtained a semigroup amalgam associated to a vector bundle.

Consider the vector bundle $p:V \to M$ with base $M$ of dimension $k$. Then each fiber $p^{-1}(m)$  of the bundle $p:V \to M$ is a $k$ dimensional vector space $V_m$. Now it is easy to observe that any subspace of $V_m$ is a fibre of some vector bundle of dimension $"\leq"\, k$.  

\begin{defn}
For a vector bundle $p:V \to M$ of dimension $k$, there is a category termed as the category of fibers of vector bundles whose object is the set of fibers of diamension $"\leq"\, k$  and smooth linear maps between these objects as morphisms and we denote this category by $\clc_{fb}(M)$.
\end{defn}

The objects set in the category $\clc_{fb}(M)$ is given by 
$$v\clc_{fb}(M)= \{V_{\alpha} :V_{i}\,\text{vector space of }\,dim\,\leq k,\, i\in I,\,\text{where $I$-indexing set}\}$$ 
The endomorphism semigroup of $V_{i}$ is the semigroup $\it {Sing(V_{i} )}$ and its cross-connection semigroup is 
$$\tilde{S_{i}}\Gamma_{\epsilon}= \{(\alpha, \epsilon^{-1}\alpha\epsilon) \quad \text{such that}\, \alpha\in 
\it {Sing(V_i )}  \}.$$.

  Next we proceed to describe the semigroup amalgam of the semigroups 
 $\tilde{S_{i}}\Gamma_{\epsilon}$ and thus obtain a structure associated with a vector bundle.

  \subsection{Semigroup amalgam}
   A semigroup amalgam ${\mathcal U}= [\{S_i \,:\, i\in I\}; U; \{\phi_i :i\in I\}]$ consisting of a semigroup $U$, called thge core of the amalgam, a family $\{S_i \,:\, i\in I\}$ of semigroups disjoint from each other and from $U$ and a family of monomorphisms $\{\phi_i :U\to S_i, :i\in I\}$. Often we simplify the notation to ${\mathcal U}= [\{S_i\}, U]$ when the context allows.
  
  \begin{exam}
  Let $U= \{u, v, w, z\}$ be a null semigroup in which all products
  are equal to $z$. Let $S_1=U\cup \{a\} $ where $a\notin S_1, au=ua=v$, and all the other
  products in $S_1$ are set equal to $z$. Let $S_2=U\cup \{b\} $ where $b\notin S_1, 
  bv = vb = w,$ and all other products in $s_2$ are set equal to $z$. Then a straightforward   verification shows that both $S-1$ and $S_2$ are semigroups. Then 
  $[U; S_2;S_2;i_1;i_2]$ (where $i_1$ and $i_2$ are the embedding morphisms) is a semigroup amalgam.
\end{exam}
  
  Consider vector bundle $p:V\to M$ of dimension $k$ and the category of fibers of vector bundle $\clc_{fb}(M)$. For each object $V_i\in \clc_{fb}(M)$ these is an associated cross-connection semigroup $\tilde{S_i}\Gamma_{\epsilon}$ and hence we have  an indexed family of disjoint semigroups $\{\tilde{S_i}\Gamma_{\epsilon}: i\in I\}$
  
  \begin{thm} Let $p: V\to M$ be a vector bundle whose fibers is an indexed family of vectorspaces $\{V_i: i\in I \}$. For each isomorphism $\epsilon\in V_i$, there is a  cross-connection semigroup 
  	$$\tilde{S_i}\Gamma_{\epsilon}= 
  	\{(\alpha, \epsilon^{-1}\alpha\epsilon) \quad \text{such that}\,\, \alpha\in \it {Sing(V_{\alpha} )} \}.$$
  Then there is semigroup amalgam $${\mathcal U}= [\{\tilde{S_i}\Gamma_{\epsilon} \,:\, i\in I\}; U; \{\phi_i :i\in I\}]$$
  of the vector bundle  $p: V\to M$ where $U$ is the core of the amalgam and a family of injective homorphisms $\{\phi_i :U\to S_i, :i\in I\}$. 
  \end{thm}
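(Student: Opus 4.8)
The plan is to reduce everything to the singular endomorphism semigroups of the fibres and then to amalgamate these along the common model furnished by the typical fibre of the bundle. The key input is the cross-connection representation of $\mathrm{Sing}(V)$ recalled above (cf.~\cite{paa}): for each $i\in I$, fixing an automorphism $\epsilon_i$ of $V_i$ — any choice will do, and if one wishes to retain every $\epsilon$ one simply indexes the family by pairs $(i,\epsilon_i)$ — the cross-connection semigroup
$$S_i:=\tilde{S_i}\Gamma_{\epsilon_i}=\{(\alpha,\epsilon_i^{-1}\alpha\epsilon_i):\alpha\in\mathrm{Sing}(V_i)\}$$
is isomorphic to $\mathrm{Sing}(V_i)$ via the first-coordinate projection $\pi_i\colon(\alpha,\epsilon_i^{-1}\alpha\epsilon_i)\mapsto\alpha$; indeed, under the linked-pair product $(\gamma,\gamma^{*})(\delta,\delta^{*})=(\gamma\delta,\delta^{*}\gamma^{*})$ the first coordinate is multiplied in $\mathcal{TS}(V_i)\cong\mathrm{Sing}(V_i)$, so $\pi_i$ is a homomorphism. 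Thus each $S_i$ is determined up to isomorphism by its fibre $V_i$.

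Next I would use that $p\colon V\to M$ has constant fibre dimension $k$: every fibre $V_i$ is a $k$-dimensional vector space over the ground field $F$, hence is isomorphic to the typical fibre $W\cong F^{k}$. Fix, for each $i\in I$, a linear isomorphism $\theta_i\colon W\to V_i$. Conjugation $c_{\theta_i}\colon\mathrm{Sing}(W)\to\mathrm{Sing}(V_i)$, $\beta\mapsto\theta_i\,\beta\,\theta_i^{-1}$, is a semigroup isomorphism: it is bijective with inverse $c_{\theta_i^{-1}}$, it preserves composition, and it carries non-invertible endomorphisms to non-invertible endomorphisms because $\theta_i$ is invertible.

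Now take the core $U$ to be (an isomorphic copy of) the cross-connection semigroup $\tilde{S}\Gamma_{\epsilon_0}$ of the typical fibre $W$, for a fixed automorphism $\epsilon_0$ of $W$, so that $U\cong\mathrm{Sing}(W)$. Replacing $U$ and each $S_i$ by isomorphic copies if necessary — for instance relabelling $S_i$ as $\{i\}\times S_i$ — we may assume $U$ and all the $S_i$ are pairwise disjoint; this is a purely set-theoretic adjustment that leaves every semigroup structure intact. Finally define $\phi_i\colon U\to S_i$ to be the composite of the canonical isomorphism $U\cong\mathrm{Sing}(W)$, the conjugation isomorphism $c_{\theta_i}$, and $\pi_i^{-1}\colon\mathrm{Sing}(V_i)\to S_i$; explicitly
$$\phi_i\bigl(\beta,\epsilon_0^{-1}\beta\epsilon_0\bigr)=\bigl(\theta_i\beta\theta_i^{-1},\ \epsilon_i^{-1}\theta_i\beta\theta_i^{-1}\epsilon_i\bigr).$$
Being a composite of semigroup isomorphisms, each $\phi_i$ is an injective homomorphism, so $\mathcal{U}=[\{S_i:i\in I\};U;\{\phi_i:i\in I\}]$ satisfies the definition of a semigroup amalgam; built from the fibres of $p$, it is the amalgam associated with the vector bundle $p\colon V\to M$.

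The only step that needs genuine care is checking that each $\phi_i$ respects the linked-pair multiplication; this comes down to verifying that conjugation by $\theta_i$ intertwines the products on both coordinates — the first computed in $\mathcal{TS}(V_i)\cong\mathrm{Sing}(V_i)$ and the second in $\mathcal{TA}(V_i)\cong\mathrm{Sing}(V_i)^{\mathrm{op}}$ — which, via the cross-connection representation, reduces to the elementary fact that conjugation by a linear isomorphism is a semigroup isomorphism. The existence of the isomorphisms $\theta_i$ and the disjointness bookkeeping over a possibly large index set $I$ are routine, so the real content of the theorem rests on the cross-connection representation of $\mathrm{Sing}(V)$ established in \cite{paa}.
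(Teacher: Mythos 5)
Your argument is correct under the reading that all the $V_i$ are genuine fibres of the rank-$k$ bundle, and it is close in spirit to the paper's proof, but the two differ in one substantive point. The paper does not take the core to be modelled on the typical fibre: it chooses a linearly independent set $W$ disjoint from fixed bases $\mathcal{B}_i$ of the $V_i$ with $\lvert W\rvert\leq\lvert\mathcal{B}_i\rvert$ for every $i$, lets $\tilde{W}$ be its span, and takes $U=\tilde{S}\Gamma_{\epsilon}$ of $\tilde{W}$; the injections $\phi_i\colon U\to \tilde{S_i}\Gamma_{\epsilon}$ are then only asserted to exist. This choice of a possibly lower-dimensional core is deliberate, since the indexed family $\{V_i:i\in I\}$ is taken from the category $\mathcal{C}_{fb}(M)$, whose objects have dimension $\leq k$ and need not be pairwise isomorphic; in that setting your conjugation isomorphisms $c_{\theta_i}$ do not exist for the lower-dimensional objects, and the correct replacement is an embedding $\mathrm{Sing}(\tilde{W})\hookrightarrow\mathrm{Sing}(V_i)$ obtained, say, by extending an endomorphism of a copy of $\tilde{W}$ inside $V_i$ by zero on a complement (this is injective, multiplicative, and lands in the singular part), after which one transports along the isomorphisms $\tilde{S}\Gamma_{\epsilon}\cong\mathrm{Sing}(\cdot)$ exactly as you do. What your write-up adds, and the paper omits, is the explicit construction and verification of the $\phi_i$ (first-coordinate projection, conjugation/embedding, and the disjointness relabelling), together with the observation that a single $\epsilon$ cannot serve all fibres and must be replaced by a family $\epsilon_i$ of automorphisms, one per $V_i$ --- a point the theorem's statement and the paper's proof both leave unaddressed. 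So: same overall strategy (pick one model space, form its cross-connection semigroup as the core, embed it into each $\tilde{S_i}\Gamma_{\epsilon}$), but you should weaken the conjugation step to an extend-by-zero embedding so that the argument also covers cores and objects of unequal dimension, which is the case the paper's choice of $\tilde{W}$ is designed to handle.
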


  \begin{proof}
  We have $\{\tilde{S_i}\Gamma_{\epsilon} :i\in I\}$, a disjoint family of cross-connection
  semigroups of fibers $\{V_i:i\in i\}$.  
  Let ${\mathcal B_i}$ be a basis for ${V_i}$ and so each 
${\mathcal B_i}$'s are disjoint linearly independant sets.
  Now choose linearly independant set $W$ disjoint from each ${\mathcal B_i}$'s and $\mid W\mid \leq \mid {\mathcal B_i}\mid,\, \forall i $ and let $\tilde{W}$ be the vector space spanned by $W$. Then there is a cross-connection semigroup of $\tilde{W}$, given by $\tilde{W}\Gamma_{\epsilon}$, denote it by $U$. Now it is easy to see that there exists injective homorphisms $\{\phi_i :U\to S_i, :i\in I\}$. Hence we have the semigroup amalgam ${\mathcal U}= [\{\tilde{S_i}\Gamma_{\epsilon}\} ; U; \{\phi_i \}]$.
  \end{proof}

\end{document}